\numberwithin{equation}{section}
\theoremstyle{definition}
\title{Cylindric multipartitions and level-rank duality}
\author{Thomas Gerber}
\address[]{Lehrstuhl D f\"ur Mathematik, RWTH Aachen, 52062 Aachen, Germany.}
\email{gerber@math.rwth-aachen.de}
\begin{document}

\begin{abstract}
We show that a multipartition is cylindric if and only if its level rank-dual is a source in the corresponding affine type $A$ crystal.
This provides an algebraic interpretation of cylindricity, and completes a similar result for FLOTW multipartitions.
\end{abstract}

\maketitle

% \tableofcontents

\sloppy

\markleft{THOMAS GERBER}
\markright{CYLINDRIC MULTIPARTITIONS AND LEVEL-RANK DUALITY}

\section*{Introduction}

Cylindric multipartitions are a particular class of tuples of partitions 
that were first introduced by Gessel and Krattenthaler \cite{GesselKrattenthaler1997},
in order to study basic hypergeometric series in affine type $A$\footnote{ 
Note that in \cite{GesselKrattenthaler1997}, the terminology used is ``cylindric partitions'',
and these objects are not directly defined as tuples of partitions.
However, it is easy to see them as such, see for instance \cite[Appendix C]{FodaWelsh2015}.
In fact, we will not need Gessel and Krattenthaler's original definition in this paper.
}.
They turned up in representation theory in the work of Foda, Leclerc, Okado, Thibon and Welsh \cite{FLOTW1999},
building on earlier results of Jimbo, Misra, Miwa and Okado \cite{JMMO1991}.
In fact, cylindric multipartitions that verify an additionnal property, called the FLOTW multipartitions,
appear as vertices in the crystal graph of certain irreducible highest weight representations of affine type $A$ quantum groups \cite[Theorem 2.10]{FLOTW1999}.

\medskip

At the combinatorial level, the study of cylindric multipartitions gave rise to many different and interesting results.
A formula for the generating functions for cylindric multipartitions was first given in \cite{GesselKrattenthaler1997} in a special case,
and later by Borodin \cite[Proposition 5.1]{Borodin2007} in the general case, using a probabilistic tool called the periodic Schur process.
Independently, Tingley used the representation-theoretic interpretation of cylindric multipartitions and the Weyl-Kac character formula 
to give an alternative formula for this generating function \cite[Theorem 4.17]{Tingley2008}, 
and showed that it agrees with Borodin's formula.
More recently, Foda and Welsh \cite{FodaWelsh2015} rederived the Andrews-Gordon and Bressoud identities
(generalizing the Rogers-Ramanujan identities) via a systematic study of cylindric multipartitions.
There, they also exploited the relationships with certain characters of the generalized Virasoro algebras.

\medskip

At the representation-theoretical level, the study of cylindric and FLOTW multipartitions have very important applications.
First of all, by Ariki's theorem \cite{Ariki1996}, FLOTW multipartitions parametrize the irreducible representations
of modular cyclotomic Hecke algebras, see \cite[Section 3.4]{FLOTW1999} and further 
investigations \cite{Jacon2004}, \cite{GeckJacon2006}, \cite{Jacon2017}, \cite{GeckJacon2011}.
In fact, one can explicitly construct three commuting crystals on multipartitions
using a combinatorial level-rank duality: two Kac-Moody crystals (of affine type $A$), dual to each other, and a so-called Heisenberg crystal \cite{Gerber2016}.
By the works of Ariki \cite{Ariki2007}, Shan \cite{Shan2011}, Shan and Vasserot \cite{ShanVasserot2012} and Losev \cite{Losev2015},
these crystals are categorified by certain branching rules for representations of cyclotomic Hecke algebras and rational Cherednik algebras (in the category $\cO$).
In particular, sources in the crystals also have an important meaning. 
For instance, finite-dimensional irreducible representations of cyclotomic Cherednik algebras are indexed by multipartitions that are sources in 
the Kac-Moody and Heisenberg crystals simultaneously \cite[Proposition 5.18]{ShanVasserot2012}.
In turn, these correspond to multipartitions whose level-rank dual is FLOTW \cite[Theorem 7.7]{Gerber2016}.

\medskip

In this short note, we complete the latter result by showing that cylindric multipartitions
are precisely those multipartitions whose level-rank dual is a source in the dual Kac-Moody crystal.
We use exclusively combinatorial arguments, relying on the abacus representation of multipartitions,
which we recall in Section \ref{sec_abaci}.
In Section \ref{sec_cyl}, we give the definition of cylindric and FLOTW multipartitions and explain
how to detect cylindricity in an abacus.
Section 3 provides the expected result, namely Theorem \ref{thm_main}.

\section{Combinatorics of abaci}\label{sec_abaci}

In the rest of the paper, we fix $e,\ell\in\Z_{>1}$ and $s\in\Z$. 

\medskip

\subsection{Multipartitions and abaci}

Let $x\in\Z_{>1}$. Later, we will take either $x=\ell$ or $x=e$.
% , we call \textit{$x$-charge} an $x$-tuple of integers,
% and \textit{$x$-partition} an $x$-tuple of partitions.
An \textit{$x$-abacus} is a subset $\cA$ of $\Z \times \{ 1,\dots,x \}$
such that there exists $m_-,m_+\in\Z$ verifying: 
\begin{itemize}
 \item For all $\be\leq m_-$ and for all $j\in\{ 1,\dots,x\}$, $(\be,j)\in\cA$.
 \item For all $\be\geq m_+$ and for all $j\in\{ 1,\dots,x\}$, $(\be,j)\notin\cA$.
\end{itemize}

Let us consider the following two different graphical representations of $x$-abaci.
Firstly, we can represent an $x$-abacus $\cA$ by $x$ rows of beads, numbered from $1$ at the bottom to $x$ at the top,
where we put a black (respectively white) bead in column $\be$ and row $j$ if $(\be,j)\in\cA$ (respectively $(\be,j)\notin\cA$).
We call this the horizontal representation of $\cA$.
In a dual fashion, we can represent $\cA$ by $x$ columns of beads, numbered from $1$ at the left to $x$ at the right,
where we put a white (respectively black) bead in row $\be$ and column $j$ if $(\be,j)\in\cA$ (respectively $(\be,j)\notin\cA$).
We call this the vertical representation of $\cA$, and we will use the notation $\dcA$ instead.
In the rest of this paper, we will use the horizontal representation for $\ell$-abaci
and  the vertical representation for $e$-abaci.

Let $\cA$ be an $x$-abacus. The \textit{charge} of $\cA$ is the element $\bs=(s_1,\dots,s_x)\in\Z^x$ such that
in the horizontal representation, the $x$-abacus obtained from $\cA$ by pushing all black beads as far to the left as possible,
the rightmost bead in row $j$, say $(\be,j)$, verifies $\be=s_j$, for all $j\in\{1 \dots, \ell\}$.
We denote $|\bs|=\sum_{i=1}^x s_i$.
Remember that we had fixed $s\in\Z$. We will denote $\Z^x(s)=\left\{ \bs\in\Z^x \mid |\bs|=s \right\}$.

\medskip

\newcommand{\bcA}{\overline{\cA}}

An \textit{$x$-partition} is an $x$-tuple of partitions. 
Denote $\Pi^x$ the set of all $x$-partitions.
Let $\bs=(s_1,\dots,s_x)\in\Z^x$.
The set of $x$-abaci with charge $\bs$ is in bijection with the set of $x$-partitions via the map
$\cA \mapsto \bla=( (\la^{(1)}_1,\la^{(1)}_2,\dots), \dots, (\la^{(x)}_1,\la^{(x)}_2,\dots) )$ defined by 
\begin{equation*}\label{mpab}
(\be,j) \mapsto \la_k^{(j)}=\be-s_j+k-1
\end{equation*}
for all $(\be,j)\in\cA$.

We write $|\bla,\bs\rangle$ for the data consisting of an element $\bs\in\Z^x$ and an $x$-partition $\bla$, 
and call it a \textit{charged $x$-partition}.
Further, we write $\cA=\cA(\bla,\bs)$ for the corresponding $x$-abacus,
and will often identify $\cA$ with $|\bla,\bs\rangle$.

\begin{exa}

Let $\ell=3$, $\bla=(10.9.1 , 9^3.7.6.4.1, .6.3^2)$ be an $\ell$-abacus and $\bs=(-4,0,-5)$ a charge. The horizontal representation of $\cA(\bla,\bs)$ is given in Figure \ref{ab1}.
The dashed line is placed at position $\frac{1}{2}$ in order to keep track of the horizontal grading.
\begin{figure}
\begin{tikzpicture}[scale=0.5, bb/.style={draw,circle,fill,minimum size=2.5mm,inner sep=0pt,outer sep=0pt}, wb/.style={draw,circle,fill=white,minimum size=2.5mm,inner sep=0pt,outer sep=0pt}]

\node [wb] at (11,2) {};
\node [wb] at (10,2) {};
\node [wb] at (9,2) {};
\node [wb] at (8,2) {};
\node [wb] at (7,2) {};
\node [wb] at (6,2) {};
\node [wb] at (5,2) {};
\node [wb] at (4,2) {};
\node [wb] at (3,2) {};
\node [wb] at (2,2) {};
\node [bb] at (1,2) {};
\node [wb] at (0,2) {};
\node [wb] at (-1,2) {};
\node [wb] at (-2,2) {};
\node [bb] at (-3,2) {};
\node [bb] at (-4,2) {};
\node [wb] at (-5,2) {};
\node [wb] at (-6,2) {};
\node [wb] at (-7,2) {};
\node [bb] at (-8,2) {};
\node [bb] at (-9,2) {};

\node [wb] at (11,1) {};
\node [wb] at (10,1) {};
\node [bb] at (9,1) {};
\node [bb] at (8,1) {};
\node [bb] at (7,1) {};
\node [wb] at (6,1) {};
\node [wb] at (5,1) {};
\node [bb] at (4,1) {};
\node [wb] at (3,1) {};
\node [bb] at (2,1) {};
\node [wb] at (1,1) {};
\node [wb] at (0,1) {};
\node [bb] at (-1,1) {};
\node [wb] at (-2,1) {};
\node [wb] at (-3,1) {};
\node [wb] at (-4,1) {};
\node [bb] at (-5,1) {};
\node [wb] at (-6,1) {};
\node [bb] at (-7,1) {};
\node [bb] at (-8,1) {};
\node [bb] at (-9,1) {};

\node [wb] at (11,0) {};
\node [wb] at (10,0) {};
\node [wb] at (9,0) {};
\node [wb] at (8,0) {};
\node [wb] at (7,0) {};
\node [bb] at (6,0) {};
\node [wb] at (5,0) {};
\node [bb] at (4,0) {};
\node [wb] at (3,0) {};
\node [wb] at (2,0) {};
\node [wb] at (1,0) {};
\node [wb] at (0,0) {};
\node [wb] at (-1,0) {};
\node [wb] at (-2,0) {};
\node [wb] at (-3,0) {};
\node [wb] at (-4,0) {};
\node [bb] at (-5,0) {};
\node [wb] at (-6,0) {};
\node [bb] at (-7,0) {};
\node [bb] at (-8,0) {};
\node [bb] at (-9,0) {};

\draw[dashed](0.5,-0.5)--node[]{}(0.5,2.5);

\end{tikzpicture} 
\caption{Drawing the horizontal abacus from a charged multipartition.}
\label{ab1}
\end{figure}
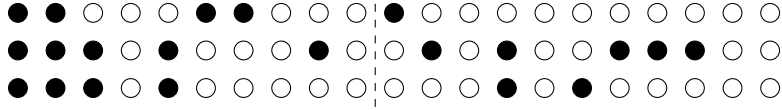

\end{exa}

\subsection{Level-rank duality} 

Let $\cA$ be an $\ell$-abacus with charge $\bs$ and consider the following procedure.
Represent $\cA$ horizontally. 
% Remember the horizontal grading by drawing a vertical segment at position $\frac{1}{2}$.
Stack copies of $\cA$ on top of each other, so that
each new copy is shifted $e$ steps to the right. 
This results in a filling of $\Z^2$ by white and black beads, denoted by $\bcA$.
Choose $e$ consecutive columns of beads such that the index of the rightmost column is divisible by $e$.
This is the vertical representation of a unique abacus, which we denote $\dcA$.
The corresponding charge $\dbs$ verifies $|\dbs|=-|\bs|$.
We denote $\dbla$ the $e$-partition such that $\cA(\dbla,\dbs)=\dcA$.

It is easy to see that the map
\begin{equation*}\label{lr}
\begin{array}{rcl}
   \Pi^\ell\times \Z^\ell(s) & \lra & \Pi^e\times\Z^e(-s) \\
   (\bla,\bs) & \longmapsto & (\dbla,\dbs)
  \end{array}
\end{equation*}
is a bijection. We call it the \textit{level-rank duality}\footnote{Usually, $\ell$ is referred to as the level and $e$ as the rank.}.

\begin{exa}
Take $\ell=4$, $e=3$, $\bla=(3.1,\m,1^3,4.2.1)$ and $\bs=(0,1,1,2)$.
Then the level-rank duality is illustrated in Figure \ref{figlr}.
\begin{figure}
% \begin{center}
\begin{tikzpicture}[scale=0.5, bb/.style={draw,circle,fill,minimum size=2.5mm,inner sep=0pt,outer sep=0pt}, wb/.style={draw,circle,fill=white,minimum size=2.5mm,inner sep=0pt,outer sep=0pt}]

\node [bb] at (3,-12) {};
\node [bb] at (4,-12) {};
\node [bb] at (5,-12) {};
\node [bb] at (6,-12) {};
\node [bb] at (7,-12) {};
\node [bb] at (8,-12) {};
\node [wb] at (9,-12) {};
\node [bb] at (10,-12) {};
\node [wb] at (11,-12) {};
\node [wb] at (12,-12) {};
\node [bb] at (13,-12) {};
\node [wb] at (14,-12) {};
\node [wb] at (15,-12) {};
\node [wb] at (16,-12) {};
\node [wb] at (17,-12) {};
\node [wb] at (18,-12) {};
\node [wb] at (19,-12) {};

\node [bb] at (3,-11) {};
\node [bb] at (4,-11) {};
\node [bb] at (5,-11) {};
\node [bb] at (6,-11) {};
\node [bb] at (7,-11) {};
\node [bb] at (8,-11) {};
\node [bb] at (9,-11) {};
\node [bb] at (10,-11) {};
\node [bb] at (11,-11) {};
\node [wb] at (12,-11) {};
\node [wb] at (13,-11) {};
\node [wb] at (14,-11) {};
\node [wb] at (15,-11) {};
\node [wb] at (16,-11) {};
\node [wb] at (17,-11) {};
\node [wb] at (18,-11) {};
\node [wb] at (19,-11) {};

%%%%%
\node [bb] at (3,-10) {};
\node [bb] at (4,-10) {};
\node [bb] at (5,-10) {};
\node [bb] at (6,-10) {};
\node [bb] at (7,-10) {};
\node [bb] at (8,-10) {};
\node [wb] at (9,-10) {};
\node [bb] at (10,-10) {};
\node [bb] at (11,-10) {};
\node [bb] at (12,-10) {};
\node [wb] at (13,-10) {};
\node [wb] at (14,-10) {};
\node [wb] at (15,-10) {};
\node [wb] at (16,-10) {};
\node [wb] at (17,-10) {};
\node [wb] at (18,-10) {};
\node [wb] at (19,-10) {};

\node [bb] at (3,-9) {};
\node [bb] at (4,-9) {};
\node [bb] at (5,-9) {};
\node [bb] at (6,-9) {};
\node [bb] at (7,-9) {};
\node [bb] at (8,-9) {};
\node [bb] at (9,-9) {};
\node [wb] at (10,-9) {};
\node [bb] at (11,-9) {};
\node [wb] at (12,-9) {};
\node [bb] at (13,-9) {};
\node [wb] at (14,-9) {};
\node [wb] at (15,-9) {};
\node [bb] at (16,-9) {};
\node [wb] at (17,-9) {};
\node [wb] at (18,-9) {};
\node [wb] at (19,-9) {};

\draw[dashed](10.5,-12.5)--node[]{}(10.5,-8.5);
\draw[dashed](7.5,-12.5)--node[]{}(10.5,-12.5);

\draw[dashed](13.5,-8.5)--node[]{}(13.5,-4.5);
\draw[dashed](10.5,-8.5)--node[]{}(13.5,-8.5);

%%%
\node [bb] at (6,-5) {};
\node [bb] at (7,-5) {};
\node [bb] at (8,-5) {};
\node [bb] at (9,-5) {};
\node [bb] at (10,-5) {};
\node [bb] at (11,-5) {};
\node [bb] at (12,-5) {};
\node [wb] at (13,-5) {};
\node [bb] at (14,-5) {};
\node [wb] at (15,-5) {};
\node [bb] at (16,-5) {};
\node [wb] at (17,-5) {};
\node [wb] at (18,-5) {};
\node [bb] at (19,-5) {};
\node [wb] at (20,-5) {};
\node [wb] at (21,-5) {};
\node [wb] at (22,-5) {};
%%%
\node [bb] at (6,-6) {};
\node [bb] at (7,-6) {};
\node [bb] at (8,-6) {};
\node [bb] at (9,-6) {};
\node [bb] at (10,-6) {};
\node [bb] at (11,-6) {};
\node [wb] at (12,-6) {};
\node [bb] at (13,-6) {};
\node [bb] at (14,-6) {};
\node [bb] at (15,-6) {};
\node [wb] at (16,-6) {};
\node [wb] at (17,-6) {};
\node [wb] at (18,-6) {};
\node [wb] at (19,-6) {};
\node [wb] at (20,-6) {};
\node [wb] at (21,-6) {};
\node [wb] at (22,-6) {};
%%%
\node [bb] at (6,-7) {};
\node [bb] at (7,-7) {};
\node [bb] at (8,-7) {};
\node [bb] at (9,-7) {};
\node [bb] at (10,-7) {};
\node [bb] at (11,-7) {};
\node [bb] at (12,-7) {};
\node [bb] at (13,-7) {};
\node [bb] at (14,-7) {};
\node [wb] at (15,-7) {};
\node [wb] at (16,-7) {};
\node [wb] at (17,-7) {};
\node [wb] at (18,-7) {};
\node [wb] at (19,-7) {};
\node [wb] at (20,-7) {};
\node [wb] at (21,-7) {};
\node [wb] at (22,-7) {};
%%%
\node [bb] at (6,-8) {};
\node [bb] at (7,-8) {};
\node [bb] at (8,-8) {};
\node [bb] at (9,-8) {};
\node [bb] at (10,-8) {};
\node [bb] at (11,-8) {};
\node [wb] at (12,-8) {};
\node [bb] at (13,-8) {};
\node [wb] at (14,-8) {};
\node [wb] at (15,-8) {};
\node [bb] at (16,-8) {};
\node [wb] at (17,-8) {};
\node [wb] at (18,-8) {};
\node [wb] at (19,-8) {};
\node [wb] at (20,-8) {};
\node [wb] at (21,-8) {};
\node [wb] at (22,-8) {};

\draw[dashed](16.5,-4.5)--node[]{}(16.5,-0.5);
\draw[dashed](13.5,-4.5)--node[]{}(16.5,-4.5);
%%%
\node [bb] at (9,-1) {};
\node [bb] at (10,-1) {};
\node [bb] at (11,-1) {};
\node [bb] at (12,-1) {};
\node [bb] at (13,-1) {};
\node [bb] at (14,-1) {};
\node [bb] at (15,-1) {};
\node [wb] at (16,-1) {};
\node [bb] at (17,-1) {};
\node [wb] at (18,-1) {};
\node [bb] at (19,-1) {};
\node [wb] at (20,-1) {};
\node [wb] at (21,-1) {};
\node [bb] at (22,-1) {};
\node [wb] at (23,-1) {};
\node [wb] at (24,-1) {};
\node [wb] at (25,-1) {};
%%%
\node [bb] at (9,-2) {};
\node [bb] at (10,-2) {};
\node [bb] at (11,-2) {};
\node [bb] at (12,-2) {};
\node [bb] at (13,-2) {};
\node [bb] at (14,-2) {};
\node [wb] at (15,-2) {};
\node [bb] at (16,-2) {};
\node [bb] at (17,-2) {};
\node [bb] at (18,-2) {};
\node [wb] at (19,-2) {};
\node [wb] at (20,-2) {};
\node [wb] at (21,-2) {};
\node [wb] at (22,-2) {};
\node [wb] at (23,-2) {};
\node [wb] at (24,-2) {};
\node [wb] at (25,-2) {};
%%%
\node [bb] at (9,-3) {};
\node [bb] at (10,-3) {};
\node [bb] at (11,-3) {};
\node [bb] at (12,-3) {};
\node [bb] at (13,-3) {};
\node [bb] at (14,-3) {};
\node [bb] at (15,-3) {};
\node [bb] at (16,-3) {};
\node [bb] at (17,-3) {};
\node [wb] at (18,-3) {};
\node [wb] at (19,-3) {};
\node [wb] at (20,-3) {};
\node [wb] at (21,-3) {};
\node [wb] at (22,-3) {};
\node [wb] at (23,-3) {};
\node [wb] at (24,-3) {};
\node [wb] at (25,-3) {};
%%%
\node [bb] at (9,-4) {};
\node [bb] at (10,-4) {};
\node [bb] at (11,-4) {};
\node [bb] at (12,-4) {};
\node [bb] at (13,-4) {};
\node [bb] at (14,-4) {};
\node [wb] at (15,-4) {};
\node [bb] at (16,-4) {};
\node [wb] at (17,-4) {};
\node [wb] at (18,-4) {};
\node [bb] at (19,-4) {};
\node [wb] at (20,-4) {};
\node [wb] at (21,-4) {};
\node [wb] at (22,-4) {};
\node [wb] at (23,-4) {};
\node [wb] at (24,-4) {};
\node [wb] at (25,-4) {};

\draw[dashed](19.5,-0.5)--node[]{}(19.5,3.5);
\draw[dashed](16.5,-0.5)--node[]{}(19.5,-0.5);
%%%
\node [bb] at (12,3) {};
\node [bb] at (13,3) {};
\node [bb] at (14,3) {};
\node [bb] at (15,3) {};
\node [bb] at (16,3) {};
\node [bb] at (17,3) {};
\node [bb] at (18,3) {};
\node [wb] at (19,3) {};
\node [bb] at (20,3) {};
\node [wb] at (21,3) {};
\node [bb] at (22,3) {};
\node [wb] at (23,3) {};
\node [wb] at (24,3) {};
\node [bb] at (25,3) {};
\node [wb] at (26,3) {};
\node [wb] at (27,3) {};
\node [wb] at (28,3) {};
%%%
\node [bb] at (12,2) {};
\node [bb] at (13,2) {};
\node [bb] at (14,2) {};
\node [bb] at (15,2) {};
\node [bb] at (16,2) {};
\node [bb] at (17,2) {};
\node [wb] at (18,2) {};
\node [bb] at (19,2) {};
\node [bb] at (20,2) {};
\node [bb] at (21,2) {};
\node [wb] at (22,2) {};
\node [wb] at (23,2) {};
\node [wb] at (24,2) {};
\node [wb] at (25,2) {};
\node [wb] at (26,2) {};
\node [wb] at (27,2) {};
\node [wb] at (28,2) {};
%%%
\node [bb] at (12,1) {};
\node [bb] at (13,1) {};
\node [bb] at (14,1) {};
\node [bb] at (15,1) {};
\node [bb] at (16,1) {};
\node [bb] at (17,1) {};
\node [bb] at (18,1) {};
\node [bb] at (19,1) {};
\node [bb] at (20,1) {};
\node [wb] at (21,1) {};
\node [wb] at (22,1) {};
\node [wb] at (23,1) {};
\node [wb] at (24,1) {};
\node [wb] at (25,1) {};
\node [wb] at (26,1) {};
\node [wb] at (27,1) {};
\node [wb] at (28,1) {};
%%%
\node [bb] at (12,0) {};
\node [bb] at (13,0) {};
\node [bb] at (14,0) {};
\node [bb] at (15,0) {};
\node [bb] at (16,0) {};
\node [bb] at (17,0) {};
\node [wb] at (18,0) {};
\node [bb] at (19,0) {};
\node [wb] at (20,0) {};
\node [wb] at (21,0) {};
\node [bb] at (22,0) {};
\node [wb] at (23,0) {};
\node [wb] at (24,0) {};
\node [wb] at (25,0) {};
\node [wb] at (26,0) {};
\node [wb] at (27,0) {};
\node [wb] at (28,0) {};

\draw[dashed](7.5,-16.5)--node[]{}(7.5,-12.5);
\draw[dashed](4.5,-16.5)--node[]{}(7.5,-16.5);
%%%
\node [bb] at (0,-13) {};
\node [bb] at (1,-13) {};
\node [bb] at (2,-13) {};
\node [bb] at (3,-13) {};
\node [bb] at (4,-13) {};
\node [bb] at (5,-13) {};
\node [bb] at (6,-13) {};
\node [wb] at (7,-13) {};
\node [bb] at (8,-13) {};
\node [wb] at (9,-13) {};
\node [bb] at (10,-13) {};
\node [wb] at (11,-13) {};
\node [wb] at (12,-13) {};
\node [bb] at (13,-13) {};
\node [wb] at (14,-13) {};
\node [wb] at (15,-13) {};
\node [wb] at (16,-13) {};
%%%
\node [bb] at (0,-14) {};
\node [bb] at (1,-14) {};
\node [bb] at (2,-14) {};
\node [bb] at (3,-14) {};
\node [bb] at (4,-14) {};
\node [bb] at (5,-14) {};
\node [wb] at (6,-14) {};
\node [bb] at (7,-14) {};
\node [bb] at (8,-14) {};
\node [bb] at (9,-14) {};
\node [wb] at (10,-14) {};
\node [wb] at (11,-14) {};
\node [wb] at (12,-14) {};
\node [wb] at (13,-14) {};
\node [wb] at (14,-14) {};
\node [wb] at (15,-14) {};
\node [wb] at (16,-14) {};
%%%
\node [bb] at (0,-15) {};
\node [bb] at (1,-15) {};
\node [bb] at (2,-15) {};
\node [bb] at (3,-15) {};
\node [bb] at (4,-15) {};
\node [bb] at (5,-15) {};
\node [bb] at (6,-15) {};
\node [bb] at (7,-15) {};
\node [bb] at (8,-15) {};
\node [wb] at (9,-15) {};
\node [wb] at (10,-15) {};
\node [wb] at (11,-15) {};
\node [wb] at (12,-15) {};
\node [wb] at (13,-15) {};
\node [wb] at (14,-15) {};
\node [wb] at (15,-15) {};
\node [wb] at (16,-15) {};
%%%
\node [bb] at (0,-16) {};
\node [bb] at (1,-16) {};
\node [bb] at (2,-16) {};
\node [bb] at (3,-16) {};
\node [bb] at (4,-16) {};
\node [bb] at (5,-16) {};
\node [wb] at (6,-16) {};
\node [bb] at (7,-16) {};
\node [wb] at (8,-16) {};
\node [wb] at (9,-16) {};
\node [bb] at (10,-16) {};
\node [wb] at (11,-16) {};
\node [wb] at (12,-16) {};
\node [wb] at (13,-16) {};
\node [wb] at (14,-16) {};
\node [wb] at (15,-16) {};
\node [wb] at (16,-16) {};

\draw[](16.5,-17)--node[]{}(16.5,4);
\draw[](13.5,-17)--node[]{}(13.5,4);

\node [bb] at (14,3) {};
\node [bb] at (15,3) {};
\node [bb] at (16,3) {};
\node [bb] at (14,2) {};
\node [bb] at (15,2) {};
\node [bb] at (16,2) {};
\node [bb] at (14,1) {};
\node [bb] at (15,1) {};
\node [bb] at (16,1) {};
\node [bb] at (14,0) {};
\node [bb] at (15,0) {};
\node [bb] at (16,0) {};
\node [bb] at (14,-1) {};
\node [bb] at (15,-1) {};
\node [wb] at (16,-1) {};
\node [bb] at (14,-2) {};
\node [wb] at (15,-2) {};
\node [bb] at (16,-2) {};
\node [bb] at (14,-3) {};
\node [bb] at (15,-3) {};
\node [bb] at (16,-3) {};
\node [bb] at (14,-4) {};
\node [wb] at (15,-4) {};
\node [bb] at (16,-4) {};
\draw[dashed](13.5,-4.5)--node[]{}(16.5,-4.5);
\node [bb] at (14,-5) {};
\node [wb] at (15,-5) {};
\node [bb] at (16,-5) {};
\node [bb] at (14,-6) {};
\node [bb] at (15,-6) {};
\node [wb] at (16,-6) {};
\node [bb] at (14,-7) {};
\node [wb] at (15,-7) {};
\node [wb] at (16,-7) {};
\node [wb] at (14,-8) {};
\node [wb] at (15,-8) {};
\node [bb] at (16,-8) {};
\node [wb] at (14,-9) {};
\node [wb] at (15,-9) {};
\node [bb] at (16,-9) {};
\node [wb] at (14,-10) {};
\node [wb] at (15,-10) {};
\node [wb] at (16,-10) {};
\node [wb] at (14,-11) {};
\node [wb] at (15,-11) {};
\node [wb] at (16,-11) {};
\node [wb] at (14,-12) {};
\node [wb] at (15,-12) {};
\node [wb] at (16,-12) {};
\node [wb] at (14,-13) {};
\node [wb] at (15,-13) {};
\node [wb] at (16,-13) {};
\node [wb] at (14,-14) {};
\node [wb] at (15,-14) {};
\node [wb] at (16,-14) {};
\node [wb] at (14,-15) {};
\node [wb] at (15,-15) {};
\node [wb] at (16,-15) {};
\node [wb] at (14,-16) {};
\node [wb] at (15,-16) {};
\node [wb] at (16,-16) {};
\end{tikzpicture}  
% \end{center}
\caption{From $\cA$ to $\bcA$ to $\dcA$.}
\label{figlr}
\end{figure}
\end{exa}

\begin{rem}
A slightly simpler version of the combinatorial level-rank duality was introduced by Uglov \cite{Uglov1999},
and used in the works of Tingley \cite{Tingley2008} and Foda and Welsh \cite{FodaWelsh2015}.
Here, we have twisted Uglov's level-rank duality by taking the transpose, in the spirit of \cite{Gerber2016}.
This is essential to prove Theorem \ref{thm_main}, since we use 
the commutation of the crystals \cite[Theorem 4.8]{Gerber2016} that requires this convention.
\end{rem}

\section{Cylindric and FLOTW multipartitions}\label{sec_cyl}

\subsection{Cylindricity and FLOTW property}

Let $e,\ell\in\Z_{>1}$ and $s\in\Z$. 
Set $$D(s)=\left\{ (s_1,\dots,s_\ell)\in\Z^\ell \;\left|\; \sum_{j=1}^\ell s_j = s \mand s_1\leq s_2\leq\dots\leq s_\ell \leq s_1+e \right. \right\}.$$
Note that this set depends on $\ell$ and $e$.
Further, for $\bla=((\la_1^{(1)},\la_2^{(1)},\dots),(\la_1^{(2)},\la_2^{(2)}),\dots,(\la_1^{(\ell)},\la_2^{(\ell)}))\in\Pi^\ell$, $\bs\in\Z^\ell$ and $\al\in\Z_{>0}$, denote 
$$\cR(\bla,\bs,\al)=\left\{ \la_k^{(j)}-k+s_j \mod e \mid \la_k^{(j)}=\al \; ; \; 1\leq j\leq \ell, k\geq 1\right\}.$$

\begin{defi}\label{def_flotw}
Let $\bs\in D(s)$ and $\bla$ be an $\ell$-partition. 
\begin{enumerate}
 \item We say that $|\bla,\bs\rangle$ is \textit{$e$-cylindric} if 
 $\forall 1\leq j\leq \ell-1$,  $\la_k^{(j)}\geq\la_{k+s_{j+1}-s_j}^{(j+1)}\forall k\geq1$ and $\la_k^{(\ell)}\geq\la_{k+e+s_1-s_\ell}^{(1)} \forall k\geq1$.
 \item We say that $\bla$ is \textit{$e$-FLOTW} if the two following conditions hold:
\begin{enumerate}
 \item $|\bla,\bs\rangle$ is $e$-cylindric.
 \item For all $\al\in\Z_{>0}$,  $\cR(\bla,\bs,\al)\neq \{0,\dots,e-1\}$.
\end{enumerate}
\end{enumerate}
\end{defi}

\begin{rem}\label{rem_flotw}
FLOTW multipartitions were introduced in \cite{FLOTW1999}
in order to give a combinatorial realization of irreducible 
highest weight crystals for the quantum group associated to the affine Kac-Moody 
algebra $\sle$,
see \cite[Theorem 2.10]{FLOTW1999}\footnote{In that article, they are called ``highest lifts'', following the original terminology of \cite{JMMO1991}.}.
\end{rem}

\subsection{Yoking beads}\label{sec_yoke}

Cylindricity can be easily detected on an abacus whose charge $\bs$ is in $D(s)$.
Let $\cA'$ be the $(\ell+1)$-abacus defined by $(\be,j)\in\cA' \eq (\be,j)\in\cA$ if $j\leq\ell$ and
$(\be,\ell+1)\in\cA'\eq(\be-e,1)\in\cA$, and consider its horizontal representation.
Simply put, $\cA'$ is just a collection of $\ell+1$ consecutive rows of $\bcA$.

The following procedure is inspired by \cite[Section 3.3]{Tingley2008} and \cite[Section 4]{FodaWelsh2015}.
In $\cA'$, yoke the lefmost white beads in every row to each other, and repeat the procedure recursively on the remaining unyoked white beads.

\begin{exa}\label{exa_yoke}
Take $\ell=4$, $e=3$, $\bla=(3.1,\m,1^3,4.2.1)$ and $\bs=(0,1,1,2)$.
The yoking procedure is illustrated in Figure \ref{yoke1}.
\begin{figure}
% \begin{center}
\begin{tikzpicture}[scale=0.5, bb/.style={draw,circle,fill,minimum size=2.5mm,inner sep=0pt,outer sep=0pt}, wb/.style={draw,circle,fill=white,minimum size=2.5mm,inner sep=0pt,outer sep=0pt}]

\draw[](12,-8)--node[]{}(10,-9);
\draw[](10,-9)--node[]{}(9,-10);
\draw[](9,-10.1)--node[]{}(12,-10.9);
\draw[](12,-11.1)--node[]{}(9,-11.9);

\draw[](14,-8)--node[]{}(12,-9);
\draw[](12,-9)--node[]{}(13,-10);
\draw[](13,-10)--node[]{}(13,-11);
\draw[](13,-11)--node[]{}(11,-12);

\draw[](15,-8)--node[]{}(14,-9);
\draw[](14,-9)--node[]{}(14,-10);
\draw[](14,-10)--node[]{}(14,-11);
\draw[](14,-11)--node[]{}(12,-12);

\draw[](17,-8)--node[]{}(15,-9);
\draw[](15,-9)--node[]{}(15,-10);
\draw[](15,-10)--node[]{}(15,-11);
\draw[](15,-11)--node[]{}(14,-12);

\draw[](18,-8)--node[]{}(17,-9);
\draw[](17,-9)--node[]{}(16,-10);
\draw[](16,-10)--node[]{}(16,-11);
\draw[](16,-11)--node[]{}(15,-12);

\draw[](19,-8)--node[]{}(18,-9);
\draw[](18,-9)--node[]{}(17,-10);
\draw[](17,-10)--node[]{}(17,-11);
\draw[](17,-11)--node[]{}(16,-12);

\draw[](19,-9)--node[]{}(18,-10);
\draw[](18,-10)--node[]{}(18,-11);
\draw[](18,-11)--node[]{}(17,-12);

\draw[](19,-10)--node[]{}(19,-11);
\draw[](19,-11)--node[]{}(18,-12);

% \node [bb] at (3,-12) {};
% \node [bb] at (4,-12) {};
% \node [bb] at (5,-12) {};
\node [bb] at (6,-12) {};
\node [bb] at (7,-12) {};
\node [bb] at (8,-12) {};
\node [wb] at (9,-12) {};
\node [bb] at (10,-12) {};
\node [wb] at (11,-12) {};
\node [wb] at (12,-12) {};
\node [bb] at (13,-12) {};
\node [wb] at (14,-12) {};
\node [wb] at (15,-12) {};
\node [wb] at (16,-12) {};
\node [wb] at (17,-12) {};
\node [wb] at (18,-12) {};
\node [wb] at (19,-12) {};

% \node [bb] at (3,-11) {};
% \node [bb] at (4,-11) {};
% \node [bb] at (5,-11) {};
\node [bb] at (6,-11) {};
\node [bb] at (7,-11) {};
\node [bb] at (8,-11) {};
\node [bb] at (9,-11) {};
\node [bb] at (10,-11) {};
\node [bb] at (11,-11) {};
\node [wb] at (12,-11) {};
\node [wb] at (13,-11) {};
\node [wb] at (14,-11) {};
\node [wb] at (15,-11) {};
\node [wb] at (16,-11) {};
\node [wb] at (17,-11) {};
\node [wb] at (18,-11) {};
\node [wb] at (19,-11) {};

%%%%%
% \node [bb] at (3,-10) {};
% \node [bb] at (4,-10) {};
% \node [bb] at (5,-10) {};
\node [bb] at (6,-10) {};
\node [bb] at (7,-10) {};
\node [bb] at (8,-10) {};
\node [wb] at (9,-10) {};
\node [bb] at (10,-10) {};
\node [bb] at (11,-10) {};
\node [bb] at (12,-10) {};
\node [wb] at (13,-10) {};
\node [wb] at (14,-10) {};
\node [wb] at (15,-10) {};
\node [wb] at (16,-10) {};
\node [wb] at (17,-10) {};
\node [wb] at (18,-10) {};
\node [wb] at (19,-10) {};

% \node [bb] at (3,-9) {};
% \node [bb] at (4,-9) {};
% \node [bb] at (5,-9) {};
\node [bb] at (6,-9) {};
\node [bb] at (7,-9) {};
\node [bb] at (8,-9) {};
\node [bb] at (9,-9) {};
\node [wb] at (10,-9) {};
\node [bb] at (11,-9) {};
\node [wb] at (12,-9) {};
\node [bb] at (13,-9) {};
\node [wb] at (14,-9) {};
\node [wb] at (15,-9) {};
\node [bb] at (16,-9) {};
\node [wb] at (17,-9) {};
\node [wb] at (18,-9) {};
\node [wb] at (19,-9) {};

\draw[dashed](10.5,-12.5)--node[]{}(10.5,-8.5);
% \draw[](7.5,-12.5)--node[]{}(10.5,-12.5);

\draw[dashed](13.5,-8.5)--node[]{}(13.5,-7.5);
\draw[dashed](10.5,-8.5)--node[]{}(13.5,-8.5);

% 
% %%%
%%%
\node [bb] at (6,-8) {};
\node [bb] at (7,-8) {};
\node [bb] at (8,-8) {};
\node [bb] at (9,-8) {};
\node [bb] at (10,-8) {};
\node [bb] at (11,-8) {};
\node [wb] at (12,-8) {};
\node [bb] at (13,-8) {};
\node [wb] at (14,-8) {};
\node [wb] at (15,-8) {};
\node [bb] at (16,-8) {};
\node [wb] at (17,-8) {};
\node [wb] at (18,-8) {};
\node [wb] at (19,-8) {};
% \node [wb] at (20,-8) {};
% \node [wb] at (21,-8) {};
% \node [wb] at (22,-8) {};

\end{tikzpicture}  
% \end{center}
\caption{Yoking white beads in the extended abacus.}
\label{yoke1}
\end{figure}
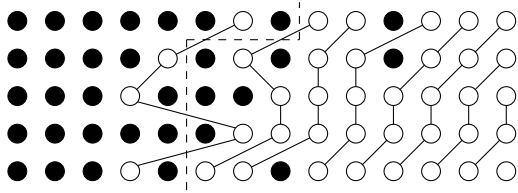
\end{exa}

\begin{lem}\label{lem_yoke}
The abacus $\cA$ is cylindric if and only every pair of yoked white beads $( (\be_1,j+1),(\be_2,j) )$ in $\cA'$ verifies $\be_1\geq\be_2$.
\end{lem}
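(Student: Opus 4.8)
The plan is to compute the positions of the white beads in each row of $\cA'$ explicitly, observe that yoking simply pairs up the $r$-th leftmost white beads across adjacent rows, and then match the resulting inequalities with those of Definition~\ref{def_flotw}(1) via conjugation of partitions.

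First I would record where the white beads sit. In row $j$ (for $1\le j\le\ell$) the black beads occupy the positions $\{\la^{(j)}_k+s_j-k+1\mid k\ge 1\}$, and by the standard reflection symmetry of the associated Maya diagram the complementary white beads occupy, read from left to right, the positions
\[
w^{(j)}_r:=s_j+r-(\la^{(j)})'_r,\qquad r\ge 1,
\]
where $\mu'$ denotes the conjugate of a partition $\mu$. This is an elementary verification: the map $r\mapsto s_j+r-(\la^{(j)})'_r$ is strictly increasing in $r$, and its image is exactly the complement in $\Z$ of the black-bead set. Since row $\ell+1$ of $\cA'$ is row $1$ translated $e$ steps to the right, one has $w^{(\ell+1)}_r=w^{(1)}_r+e$.

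Next I would interpret the yoking combinatorially. As the procedure recursively joins the $r$-th leftmost white bead of each row, a yoked pair $((\be_1,j+1),(\be_2,j))$ is precisely $(w^{(j+1)}_r,w^{(j)}_r)$ for a common rank $r$ and some $1\le j\le\ell$. Hence the condition of the statement, that $\be_1\ge\be_2$ for every yoked pair, is equivalent to $w^{(j+1)}_r\ge w^{(j)}_r$ for all $r\ge 1$ and all $1\le j\le\ell$. Substituting the formula for $w^{(j)}_r$ and using $w^{(\ell+1)}_r=w^{(1)}_r+e$, these inequalities read
\[
(\la^{(j+1)})'_r-(\la^{(j)})'_r\le s_{j+1}-s_j \quad (1\le j\le\ell-1),\qquad (\la^{(1)})'_r-(\la^{(\ell)})'_r\le e+s_1-s_\ell,
\]
for all $r\ge 1$. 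The heart of the argument is then a conjugation identity: for partitions $\mu,\nu$ and an integer $t\ge 0$,
\[
\mu_k\ge\nu_{k+t}\ \forall k\ge 1 \iff \nu'_r-\mu'_r\le t\ \forall r\ge 1.
\]
I would prove this by noting that $\tilde\nu:=(\nu_{k+t})_{k\ge1}$ is a partition with $\tilde\nu'_r=\max(\nu'_r-t,0)$, so that the containment $\mu_k\ge\tilde\nu_k$ for all $k$ is equivalent to $\mu'_r\ge\tilde\nu'_r$ for all $r$, which simplifies to $\nu'_r-\mu'_r\le t$. Crucially, since $\bs\in D(s)$ forces $s_1\le\dots\le s_\ell\le s_1+e$, the relevant shifts $t=s_{j+1}-s_j$ and $t=e+s_1-s_\ell$ are nonnegative, so the identity applies. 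Applying it with $(\mu,\nu,t)=(\la^{(j)},\la^{(j+1)},s_{j+1}-s_j)$ and with $(\mu,\nu,t)=(\la^{(\ell)},\la^{(1)},e+s_1-s_\ell)$ turns the cylindricity inequalities of Definition~\ref{def_flotw}(1) into exactly the white-bead inequalities displayed above, yielding the equivalence.

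The main obstacle I anticipate is bookkeeping rather than conceptual: pinning down the white-bead positions correctly (the conjugate partition appears precisely because the white beads are the reflected image of the black ones) and checking that the wrap-around row $\ell+1$ contributes the correct $+e$ shift, so that the final yoking inequality matches the cyclic cylindricity condition. Once the dictionary $w^{(j)}_r=s_j+r-(\la^{(j)})'_r$ and the conjugation identity are in place, the two families of inequalities coincide termwise, and the hypothesis $\bs\in D(s)$ is exactly what guarantees the shifts are nonnegative so that conjugation can be applied.
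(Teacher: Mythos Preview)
Your argument is correct. The white-bead formula $w^{(j)}_r=s_j+r-(\la^{(j)})'_r$ is standard and your conjugation identity is proved cleanly; once these two pieces are in place, the yoke inequalities and the cylindricity inequalities match termwise exactly as you say, with $\bs\in D(s)$ ensuring the shifts are nonnegative.

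Your route differs from the paper's. The paper does not compute white-bead positions directly. Instead it defines a dual yoking on \emph{black} beads, uses the abacus-to-partition formula (which involves the parts $\la_k^{(j)}$ themselves, not their conjugates) to show that cylindricity is equivalent to all black yokes pointing ``north-west/south-east'', and then gives a separate combinatorial argument that black yokes all point NW/SE if and only if white yokes all point NE/SW. Your approach bypasses the black-bead detour by reading the white beads via the conjugate partition and converting back with the containment-conjugation equivalence $\mu_k\ge\nu_{k+t}\iff\nu'_r-\mu'_r\le t$. This is arguably more direct and computationally transparent; the paper's approach, on the other hand, keeps everything in abacus language and makes the transpose symmetry visible without any formula-chasing (this is what their Remark on transposition is pointing at). Both proofs ultimately rest on the same duality between parts and conjugate parts, just packaged differently.
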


In other words, an $\ell$-abacus is cylindric if and only if its charge is in $D(s)$ and all yokes have a ``north-east/south-west'' direction.

\begin{proof}
Consider the following dual yoking procedure on $\cA'$.
Let $\be_0$ be the index of the rightmost column of $\cA'$ such that $(\be,j)$ is black for all $\be\leq\be_0$
and for all $j=1,\dots,\ell+1$. 
For all $\be\leq \be_0$, yoke the black beads in consecutive rows of column $\be$ to each other.
By construction, there is a black bead in row $\ell+1$ which is not yoked yet.
Yoke the leftmost such bead to the leftmost black bead in row $\ell$ which is not yoked yet (if it exists).
Iterate for each remaining black bead of row $\ell+1$ of $\cA'$.
Since $\bs\in D(s)$, the only black beads that remain unyoked appear on row $\ell+1$.
Figure \ref{yoke3} illustrates the dual yoking procedure for the abacus of Example \ref{exa_yoke}.
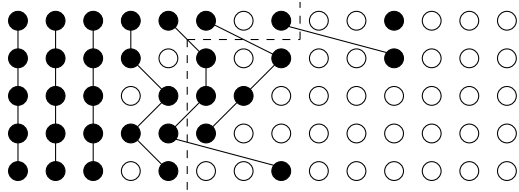
\begin{figure}
% \begin{center}
\begin{tikzpicture}[scale=0.5, bb/.style={draw,circle,fill,minimum size=2.5mm,inner sep=0pt,outer sep=0pt}, wb/.style={draw,circle,fill=white,minimum size=2.5mm,inner sep=0pt,outer sep=0pt}]

% \node [bb] at (3,-12) {};
% \node [bb] at (4,-12) {};
% \node [bb] at (5,-12) {};
\node [bb] at (6,-12) {};
\node [bb] at (7,-12) {};
\node [bb] at (8,-12) {};
\node [wb] at (9,-12) {};
\node [bb] at (10,-12) {};
\node [wb] at (11,-12) {};
\node [wb] at (12,-12) {};
\node [bb] at (13,-12) {};
\node [wb] at (14,-12) {};
\node [wb] at (15,-12) {};
\node [wb] at (16,-12) {};
\node [wb] at (17,-12) {};
\node [wb] at (18,-12) {};
\node [wb] at (19,-12) {};

% \node [bb] at (3,-11) {};
% \node [bb] at (4,-11) {};
% \node [bb] at (5,-11) {};
\node [bb] at (6,-11) {};
\node [bb] at (7,-11) {};
\node [bb] at (8,-11) {};
\node [bb] at (9,-11) {};
\node [bb] at (10,-11) {};
\node [bb] at (11,-11) {};
\node [wb] at (12,-11) {};
\node [wb] at (13,-11) {};
\node [wb] at (14,-11) {};
\node [wb] at (15,-11) {};
\node [wb] at (16,-11) {};
\node [wb] at (17,-11) {};
\node [wb] at (18,-11) {};
\node [wb] at (19,-11) {};

%%%%%
% \node [bb] at (3,-10) {};
% \node [bb] at (4,-10) {};
% \node [bb] at (5,-10) {};
\node [bb] at (6,-10) {};
\node [bb] at (7,-10) {};
\node [bb] at (8,-10) {};
\node [wb] at (9,-10) {};
\node [bb] at (10,-10) {};
\node [bb] at (11,-10) {};
\node [bb] at (12,-10) {};
\node [wb] at (13,-10) {};
\node [wb] at (14,-10) {};
\node [wb] at (15,-10) {};
\node [wb] at (16,-10) {};
\node [wb] at (17,-10) {};
\node [wb] at (18,-10) {};
\node [wb] at (19,-10) {};

% \node [bb] at (3,-9) {};
% \node [bb] at (4,-9) {};
% \node [bb] at (5,-9) {};
\node [bb] at (6,-9) {};
\node [bb] at (7,-9) {};
\node [bb] at (8,-9) {};
\node [bb] at (9,-9) {};
\node [wb] at (10,-9) {};
\node [bb] at (11,-9) {};
\node [wb] at (12,-9) {};
\node [bb] at (13,-9) {};
\node [wb] at (14,-9) {};
\node [wb] at (15,-9) {};
\node [bb] at (16,-9) {};
\node [wb] at (17,-9) {};
\node [wb] at (18,-9) {};
\node [wb] at (19,-9) {};

\draw[dashed](10.5,-12.5)--node[]{}(10.5,-8.5);
% \draw[](7.5,-12.5)--node[]{}(10.5,-12.5);

\draw[dashed](13.5,-8.5)--node[]{}(13.5,-7.5);
\draw[dashed](10.5,-8.5)--node[]{}(13.5,-8.5);

\draw[](6,-8)--node[]{}(6,-9);
\draw[](6,-9)--node[]{}(6,-10);
\draw[](6,-10)--node[]{}(6,-11);
\draw[](6,-11)--node[]{}(6,-12);

\draw[](7,-8)--node[]{}(7,-9);
\draw[](7,-9)--node[]{}(7,-10);
\draw[](7,-10)--node[]{}(7,-11);
\draw[](7,-11)--node[]{}(7,-12);

\draw[](8,-8)--node[]{}(8,-9);
\draw[](8,-9)--node[]{}(8,-10);
\draw[](8,-10)--node[]{}(8,-11);
\draw[](8,-11)--node[]{}(8,-12);

\draw[](9,-8)--node[]{}(9,-9);
\draw[](9,-9)--node[]{}(10,-10);
\draw[](10,-10)--node[]{}(9,-11);
\draw[](9,-11)--node[]{}(10,-12);

\draw[](10,-8)--node[]{}(11,-9);
\draw[](11,-9)--node[]{}(11,-10);
\draw[](11,-10)--node[]{}(10,-11);
\draw[](10,-11.1)--node[]{}(13,-11.9);

\draw[](11,-8)--node[]{}(13,-9);
\draw[](13,-9)--node[]{}(12,-10);
\draw[](12,-10)--node[]{}(11,-11);

\draw[](13,-8.1)--node[]{}(16,-8.9);

% 
% %%%
%%%
\node [bb] at (6,-8) {};
\node [bb] at (7,-8) {};
\node [bb] at (8,-8) {};
\node [bb] at (9,-8) {};
\node [bb] at (10,-8) {};
\node [bb] at (11,-8) {};
\node [wb] at (12,-8) {};
\node [bb] at (13,-8) {};
\node [wb] at (14,-8) {};
\node [wb] at (15,-8) {};
\node [bb] at (16,-8) {};
\node [wb] at (17,-8) {};
\node [wb] at (18,-8) {};
\node [wb] at (19,-8) {};
% \node [wb] at (20,-8) {};
% \node [wb] at (21,-8) {};
% \node [wb] at (22,-8) {};

\end{tikzpicture}  
% \end{center}
\caption{Yoking black beads in the extended abacus.}
\label{yoke3}
\end{figure}
To differentiate the two yoking procedures, let us call a yoke between black beads (respectively white beads) a black yoke (respectively a white yoke).
% Call a black (respectively white) yoke trivial if there is no white (respectively black) bead located to its left (respectively right).
We claim that $\cA$ is cylindric if and only if every pair of yoked black beads $( (\be_1,j+1),(\be_2,j) )$ in $\cA'$ verifies $\be_1\leq\be_2$.
Indeed, if $1\leq j\leq \ell-1$, we can use the bijection between $\ell$-abaci and $\ell$-partitions given by Formula (\ref{mpab}).
In this case, for yoked beads $( (\be_1,j+1),(\be_2,j) )$ in $\cA'$, we have
\begin{align*}
\be_1 \leq \be_2 
& 
\quad \eq \quad 
\be_1-s_{j+1}+(k+s_{j+1}-s_j)-1 \leq \be_2-s_j+k-1 \\
& \quad \eq \quad 
\la_{k+s_{j+1}-s_j}^{(j+1)} \leq \la_k^{(j)}.
\end{align*}
If $j=\ell$, we have to substract $e$ to $\be_1$ by definition of $\cA'$.
So in this case, we have
\begin{align*}
\be_1 \leq \be_2 
& 
\quad \eq \quad 
\be_1-e+s_{1}+(k+e+s_{1}-s_\ell)-1 \leq \be_2-s_\ell+k-1 \\
& \quad \eq \quad 
\la_{k+e+s_{1}-s_\ell}^{(1)} \leq \la_k^{(\ell)},
\end{align*}
which proves the claim.
In other words, $\cA$ is cylindric if and only if all black yokes have a ``north-west/south-east'' direction.
Finally, we claim that there is a black yoke between $(\be_1,j+1)$ and $(\be_2,j)$ with $\be_1>\be_2$
if and only if there is a white yoke between $(\be'_1,j+1)$ and $(\be'_2,j)$ for some $\be'_1$, $\be'_2$ such that
$\be'_1<\be'_2$.
To see this, assume first that such a black yoke exists and consider the leftmost black yoke verifying this property, with minimal $j$.
Denote $\de=\be_1-\be_2>0$.
Then by minimality, there is a white bead in position $(\be_1-1,j+1)$. Set $\be'_1=\be_1-1$.
This bead is yoke to another white bead (since every white bead belongs to a white yoke) in row $j$.
By definition of the black yokes, there is the same number of black beads to the left of $(\be_1,j+1)$ and to the left of $(\be_2,j)$.
Therefore, by definition of the white yokes, the white bead in row $j$ which is yoked to $(\be'_1,j+1)$ has position $(\be'_2,j)$ with $\be'_2\geq\be_2+\de$.
Thus, $\be'_2\geq \be_2+(\be_1-\be_2)=\be_1=\be'_1+1$, i.e. $\be_2'>\be_1'$.
Conversely, assume there is a white yoke between $(\be'_1,j+1)$ and $(\be'_2,j)$ with $\be'_1<\be'_2$,
and consider the rightmost such yoke, with maximal $j$.
By maximality, there is a black bead in position $(\be'_1+1,j+1)$. Set $\be_1=\be'_1+1$.
If this bead is yoked to another bead, then the same argument as above applies, and the black bead in row $j$ to which it is yoked is $(\be_2,j)$ with $\be_2<\be_1$.
If this bead is not yoked, then it belongs to row $\ell+1$, but then the white bead $(\be'_1,\ell+1)$ is obviously yoked to $(\be'_1-e,1)$.
In turn, all the yokes $(\be',j)$ that connect $(\be'_1,\ell+1)$ and $(\be'_1-e,1)$ verify $\be'<\be'_1$, which contradicts the hypothesis.
\end{proof}

\begin{rem}\label{rem1}
\begin{enumerate}
\item The argument of the above proof show that cylindricity for multipartitions behaves nicely with respect to taking the transpose.
\item In \cite{FodaWelsh2015}, Foda and Welsh use a different convention for representing multipartitions by abaci. One recovers their convention by taking the transpose.
Also, they only consider abaci corresponding to $e$-cylindric multipartitions, and thus do not have an equivalent statement to Lemma \ref{lem_yoke}.
\item In \cite{Tingley2008}, Tingley also uses a different convention, but considers those abaci that essentially verify the combinatorial property of Lemma \ref{lem_yoke}.
He calls them ``descending abaci''. By Lemma \ref{lem_yoke}, descending abaci and cylindric abaci are the same (up to the twist in conventions).
\end{enumerate}
\end{rem}

\begin{exa}
Take $\ell=2$, $e=4$, $\bla=(3^2.1,4.3.2)$ and $\bs=(1,2)$.
Then $|\bla,\bs\rangle$ is cylindric, as the yoking procedure in Figure \ref{yoke2} shows.
\begin{figure}
% \begin{center}
\begin{tikzpicture}[scale=0.5, bb/.style={draw,circle,fill,minimum size=2.5mm,inner sep=0pt,outer sep=0pt}, wb/.style={draw,circle,fill=white,minimum size=2.5mm,inner sep=0pt,outer sep=0pt}]

\draw[](13,-8.1)--node[]{}(10,-8.9);
\draw[](10,-9)--node[]{}(9,-10);

\draw[](15,-8.2)--node[]{}(11,-8.8);
\draw[](11,-9)--node[]{}(11,-10);

\draw[](16,-8.1)--node[]{}(13,-8.9);
\draw[](13,-9)--node[]{}(12,-10);

\draw[](19,-8.2)--node[]{}(15,-8.8);
\draw[](15,-9)--node[]{}(15,-10);

\draw[](17,-9)--node[]{}(16,-10);

\draw[](18,-9)--node[]{}(17,-10);

\draw[](19,-9)--node[]{}(18,-10);

%%%%%
% \node [bb] at (3,-10) {};
% \node [bb] at (4,-10) {};
% \node [bb] at (5,-10) {};
\node [bb] at (6,-10) {};
\node [bb] at (7,-10) {};
\node [bb] at (8,-10) {};
\node [wb] at (9,-10) {};
\node [bb] at (10,-10) {};
\node [wb] at (11,-10) {};
\node [wb] at (12,-10) {};
\node [bb] at (13,-10) {};
\node [bb] at (14,-10) {};
\node [wb] at (15,-10) {};
\node [wb] at (16,-10) {};
\node [wb] at (17,-10) {};
\node [wb] at (18,-10) {};
\node [wb] at (19,-10) {};

% \node [bb] at (3,-9) {};
% \node [bb] at (4,-9) {};
% \node [bb] at (5,-9) {};
\node [bb] at (6,-9) {};
\node [bb] at (7,-9) {};
\node [bb] at (8,-9) {};
\node [bb] at (9,-9) {};
\node [wb] at (10,-9) {};
\node [wb] at (11,-9) {};
\node [bb] at (12,-9) {};
\node [wb] at (13,-9) {};
\node [bb] at (14,-9) {};
\node [wb] at (15,-9) {};
\node [bb] at (16,-9) {};
\node [wb] at (17,-9) {};
\node [wb] at (18,-9) {};
\node [wb] at (19,-9) {};

\draw[dashed](10.5,-10.5)--node[]{}(10.5,-8.5);
% \draw[](7.5,-12.5)--node[]{}(10.5,-12.5);

\draw[dashed](14.5,-8.5)--node[]{}(14.5,-7.5);
\draw[dashed](10.5,-8.5)--node[]{}(14.5,-8.5);

% 
% %%%
%%%
\node [bb] at (6,-8) {};
\node [bb] at (7,-8) {};
\node [bb] at (8,-8) {};
\node [bb] at (9,-8) {};
\node [bb] at (10,-8) {};
\node [bb] at (11,-8) {};
\node [bb] at (12,-8) {};
\node [wb] at (13,-8) {};
\node [bb] at (14,-8) {};
\node [wb] at (15,-8) {};
\node [wb] at (16,-8) {};
\node [bb] at (17,-8) {};
\node [bb] at (18,-8) {};
\node [wb] at (19,-8) {};
% \node [wb] at (20,-8) {};
% \node [wb] at (21,-8) {};
% \node [wb] at (22,-8) {};

\end{tikzpicture}  
% \end{center}
\caption{The result of the yoking procedure: the abacus is cylindric.}
\label{yoke2}
\end{figure}
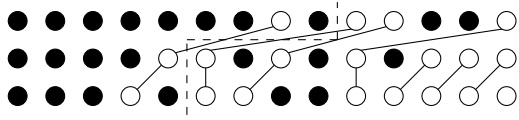
\end{exa}

\section{Crystal characterizations}\label{sec_crys}

Following \cite{Gerber2016}, there are three commuting crystal structures on the set of $\ell$-partitions:
\begin{itemize}
 \item an $\sle$-crystal arising from the integrable action of the quantum group $\Ue$ on the $v$-deformed level $\ell$ Fock space of \cite{JMMO1991},
\item an $\cH$-crystal arising from the action of the quantum Heisenberg algebra on the same space \cite{Uglov1999}, \cite{Gerber2016a},
\item an $\sll$-crystal arising from the integrable action of $\Ul$ on an appropriate direct sum of $v$-deformed level $\ell$ Fock spaces via level-rank duality (\ref{lr}) 
\cite{Uglov1999}, \cite{Gerber2016}.
\end{itemize}
To define these different module structures, one has to fix a charge parameter $\bs\in\Z^\ell$.
Without restriction, we choose $\bs\in\Z^\ell(s)$.
We do not recall here the explicit formulas for computing the different crystals.
For the $\sle$-crystal, we refer to \cite[Chapter 6]{GeckJacon2011}.
The case of $\sll$-crystal is given by the same formula, after switching the role of $\ell$ and $e$ and using Correspondence (\ref{lr}).
For the $\cH$-crystal, the complete explicit formulas have been given recently in \cite{GerberNorton2017}.

\medskip

These crystals are certains oriented colored graphs,
whose vertices are the $\ell$-partitions, and each of whose
connected components have a unique source vertex.
Sources in the $\sle$-crystal have a simple characterization.
In order to state it, we recall the notion of periods in an abacus.
The first \textit{$e$-period} in $\cA$ is, if it exists, the sequence $P=((\be_1,j_1), \dots, (\be_e,j_e))$
of $e$ elements in $\cA$ such that
\begin{itemize}
 \item $\be_1=\max\left\{  \, \be \, | \, (\be,j)\in\cA \text{ for some } j \, \right\}$
 \item $\be_i = \be_{i-1}-1$ for all $i=2,\dots,e$,
 \item $j_i \leq j_{i-1}$ for all $i=2,\dots,e$,
 \item for all $i=1,\dots,e$, there does not exist $(j_0,\be_i)\in\cA$ such that $j_0\leq j_i$.
\end{itemize}
The first period of $\cA \backslash P$, if it exists, is called the second period of $\cA$.
We define similarly the $k$-th period of $\cA$ by induction.
An $\ell$-abacus is called totally $e$-periodic if it has 
infinitely many $e$-periods. 
The following result was proved by Jacon and Lecouvey \cite[Theorem 5.9]{JaconLecouvey2012}.

\begin{thm}\label{thm_totper}
An $\ell$-abacus is a source in the $\sle$-crystal if and only if it is totally $e$-periodic.
\end{thm}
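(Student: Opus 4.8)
The plan is to translate the statement about sources into a purely combinatorial condition on beads via the signature rule for the Kashiwara operators $\tilde{e}_i$, and then match that condition to the definition of total $e$-periodicity. First I would recall the explicit description of the $\sle$-crystal from \cite[Chapter 6]{GeckJacon2011} in abacus terms: reading the content formula (\ref{mpab}), the last box of a nonempty row corresponds to a bead whose charged content equals its column index $\be$, so a removable $i$-box is a bead at a position $\be \equiv i \pmod e$ whose left neighbour $\be - 1$ (in the same row) is empty, while an addable $i$-box is an empty slot at a position $\be \equiv i \pmod e$ whose left neighbour is a bead. The operator $\tilde{e}_i$ encodes these as $-$ and $+$, reads them in the total order on boxes prescribed by $\bs$, cancels every adjacent $+\,-$ pair, and acts on the leftmost surviving $-$. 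Thus $\cA$ is a source if and only if, for every $i$, the $i$-signature contains no uncancelled $-$; call this property (S). The whole proof then reduces to showing that (S) is equivalent to total $e$-periodicity.

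Next I would exploit the geometry of a period. Since a first $e$-period $((\be_1,j_1),\dots,(\be_e,j_e))$ runs through the $e$ consecutive columns $\be_1, \be_1-1, \dots, \be_1-(e-1)$, its beads carry all $e$ residues exactly once, and each is the lowest bead of its column with weakly decreasing rows. For the direction ``totally periodic $\Rightarrow$ (S)'', I would argue layer by layer: in the situation where two consecutive period beads lie in the same row, the left neighbour of the upper one is occupied, so it contributes no $-$; when the row strictly decreases, the addable slot created at the top of the step is the partner that cancels the removable box produced further up, and total periodicity guarantees that this pairing propagates downward forever without leaving an excess of $-$ over $+$ in any single residue. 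For the converse I would argue by contraposition: if the abacus is not totally periodic, there is a first stage at which the staircase of lowest beads breaks --- either the weak-decrease condition $j_i \le j_{i-1}$ fails or a required residue is missing --- and I would show that this break exhibits a bead at some $\be \equiv i \pmod e$ with empty left slot whose would-be cancelling addable box is either absent or appears on the wrong side of it in the reading order, producing a surviving $-$ and so $\tilde{e}_i \cA \neq 0$.

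The hard part will be making the matching in the middle paragraph exact, i.e.\ proving that the fourth defining condition of a period (each period bead is lowest in its column) together with the weak decrease of rows encodes \emph{precisely} the statement ``every removable $i$-box is cancelled by an addable $i$-box to its left.'' The difficulty is bookkeeping: beads stacked in a single column, and distinct rows sharing one column index, are interleaved in the $\bs$-dependent reading order in a way that must be reconciled with the affine wrap-around of residues modulo $e$. Controlling this interleaving --- and verifying that peeling off the first period shifts the remaining periods down coherently, so that an induction on $|\bla|$ closes with the fully packed ground-state abacus as its manifestly source base case --- is where essentially all of the work lies.
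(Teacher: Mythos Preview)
The paper does not prove this theorem at all: immediately before the statement it says ``The following result was proved by Jacon and Lecouvey \cite[Theorem 5.9]{JaconLecouvey2012}'', and no argument is given. So there is no in-paper proof to compare your proposal to; Theorem~\ref{thm_totper} is quoted as an external input and used as a black box in the proof of Theorem~\ref{thm_main}.

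That said, your outline is the natural approach to the Jacon--Lecouvey result and is broadly on target: characterise sources by the vanishing of all $\tilde e_i$ via the $+/-$ signature rule, and then show that the staircase geometry of an $e$-period pairs each removable $i$-node with a cancelling addable $i$-node. Two places deserve care before you could claim a complete proof. First, the induction step ``peel off the first period and reduce to a smaller abacus'' is not quite a reduction in $|\bla|$: removing a period changes the charge, and the resulting abacus need not correspond to a strictly smaller multipartition, so you should set up the induction on a different quantity (for instance, on the number of beads to the right of some fixed column, or directly on the number of periods needed to reach a fully packed configuration). Second, your converse sketch (``the first break of the staircase produces an uncancelled $-$'') hides the genuine subtlety: when the weak-row-decrease condition fails, the offending bead can still have its $-$ cancelled by an addable node coming from a \emph{higher} row of the same column, and ruling this out requires using the fourth bullet in the definition of a period (lowest bead in its column) together with the specific reading order. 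This is exactly the bookkeeping you flag as hard, and in Jacon--Lecouvey it occupies most of the argument; your sketch is a correct plan but not yet a proof.
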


An analogous result for the $\cH$-crystal is given in \cite[Theorem 4.15 and Example 4.18]{GerberNorton2017}.
We will not need it here.

\begin{exa}\label{exa_totper}

Let $e=\ell=3$, $\bla=(1,2^2.1^2,2^4)$ and $\bs=(1,2,3)$.
The abacus $\cA(\bla,\bs)$ is represented horizontally in Figure \ref{ab_totper}.
We have yoked black beads belonging to the same $e$-period. One sees that $\cA(\bla,\bs)$ is totally $e$-periodic.
\begin{figure}
\begin{tikzpicture}[scale=0.5, bb/.style={draw,circle,fill,minimum size=2.5mm,inner sep=0pt,outer sep=0pt}, wb/.style={draw,circle,fill=white,minimum size=2.5mm,inner sep=0pt,outer sep=0pt}]

\node [bb] at (-5,0) {};
\node [bb] at (-4,0) {};
\node [bb] at (-3,0) {};
\node [bb] at (-2,0) {};
\node [bb] at (-1,0) {};
\node [bb] at (0,0) {};
\node [wb] at (1,0) {};
\node [bb] at (2,0) {};
\node [wb] at (3,0) {};
\node [wb] at (4,0) {};
\node [wb] at (5,0) {};
\node [wb] at (6,0) {};
\node [wb] at (7,0) {};
\node [wb] at (8,0) {};
\node [wb] at (9,0) {};

\node [bb] at (-5,1) {};
\node [bb] at (-4,1) {};
\node [bb] at (-3,1) {};
\node [bb] at (-2,1) {};
\node [wb] at (-1,1) {};
\node [bb] at (0,1) {};
\node [bb] at (1,1) {};
\node [wb] at (2,1) {};
\node [bb] at (3,1) {};
\node [bb] at (4,1) {};
\node [wb] at (5,1) {};
\node [wb] at (6,1) {};
\node [wb] at (7,1) {};
\node [wb] at (8,1) {};
\node [wb] at (9,1) {};

\node [bb] at (-5,2) {};
\node [bb] at (-4,2) {};
\node [bb] at (-3,2) {};
\node [bb] at (-2,2) {};
\node [bb] at (-1,2) {};
\node [wb] at (0,2) {};
\node [wb] at (1,2) {};
\node [bb] at (2,2) {};
\node [bb] at (3,2) {};
\node [bb] at (4,2) {};
\node [bb] at (5,2) {};
\node [wb] at (6,2) {};
\node [wb] at (7,2) {};
\node [wb] at (8,2) {};
\node [wb] at (9,2) {};

\draw[dashed](0.5,-0.5)--node[]{}(0.5,2.6);

\draw[](5,2)--node[]{}(4,1);
\draw[](4,1)--node[]{}(3,1);

\draw[](4,2)--node[]{}(3,2);
\draw[](3,2)--node[]{}(2,0);

\draw[](2,2)--node[]{}(1,1);
\draw[](1,1)--node[]{}(0,0);

\draw[](0,1)--node[]{}(-1,0);
\draw[](-1,0)--node[]{}(-2,0);

\draw[](-1,2)--node[]{}(-2,1);
\draw[](-2,1)--node[]{}(-3,0);

\draw[](-2,2)--node[]{}(-3,1);
\draw[](-3,1)--node[]{}(-4,0);

\draw[](-3,2)--node[]{}(-4,1);
\draw[](-4,1)--node[]{}(-5,0);

\draw[](-4,2)--node[]{}(-5,1);

\end{tikzpicture} 
\caption{A totally periodic abacus.}
\label{ab_totper}
\end{figure}
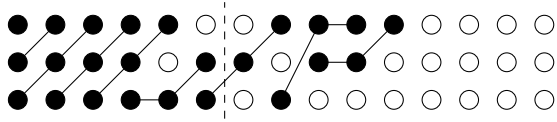

\end{exa}

\medskip

We are ready to prove the following result relating cylindricity (respectively FLOTW property) and sources in crystals.

\begin{thm}\label{thm_main} Let $\cA$ be an $\ell$-abacus.
\begin{enumerate}
 \item $\cA$ is $e$-cylindric if and only if $\dcA$ is a source in the $\sll$-crystal.
 \item $\cA$ is $e$-FLOTW if and only if $\dcA$ is a source in the $\sll$-crystal and in the $\cH$-crystal.
\end{enumerate}
\end{thm}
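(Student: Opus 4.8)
The plan is to deduce both parts from results already available, with part~(1) carrying the combinatorial content and part~(2) following formally. For part~(1), I would first invoke Theorem~\ref{thm_totper} in the dual frame: because the $\sll$-crystal on $e$-abaci is computed by the same rules as the $\sle$-crystal on $\ell$-abaci after exchanging $e$ and $\ell$ and passing through the level-rank duality~(\ref{lr}), the abacus $\dcA$ is a source in the $\sll$-crystal if and only if it is totally $\ell$-periodic as an $e$-abacus. It therefore suffices to prove the purely combinatorial equivalence that $\cA$ is $e$-cylindric if and only if $\dcA$ is totally $\ell$-periodic, and I would prove this by matching the two conditions inside $\bcA$. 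Before doing so I would record the preliminary reduction that either condition forces $\bs\in D(s)$ (so that cylindricity is even defined): total $\ell$-periodicity requires the leftmost beads to cascade, which pins the dual charge $\dbs$ into the fundamental domain that corresponds under~(\ref{lr}) to $\bs\in D(s)$.

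The heart of the argument is a dictionary on $\bcA$. Recall that $\bcA$ is invariant under the translation $(\be,r)\mapsto(\be+e,\,r+\ell)$, that $\cA'$ consists of $\ell+1$ consecutive rows of $\bcA$, and that under~(\ref{lr}) the elements of $\dcA$ are exactly the white beads of $\bcA$ lying in a window of $e$ consecutive columns, the colour convention being reversed between the horizontal and vertical representations. Under this dictionary an $\ell$-period of $\dcA$ reads, descending through $\ell$ consecutive rows, the leftmost white bead of the window in each row, with the constraint $j_i\leq j_{i-1}$ that the columns weakly decrease; a white yoke of $\cA'$ instead selects, ascending through the $\ell+1$ rows, the $k$-th leftmost white bead of each full row. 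I would show, using the translational symmetry to pass between ``leftmost in the window'' and ``$k$-th leftmost in the row'', that the $\ell$-periods of $\dcA$ are in bijection with the white yokes of $\cA'$, and that the period constraint $j_i\leq j_{i-1}$ is precisely the north-east/south-west condition $\be_1\geq\be_2$ of the associated yoke. Total $\ell$-periodicity, that is, infinitely many periods exhausting all the white beads, then translates into the requirement that every white yoke have this direction, which by Lemma~\ref{lem_yoke} is exactly cylindricity.

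The hard part will be making this bijection precise, because the two conditions are read in genuinely different frames: a period lives in a fixed vertical window of width $e$ spanning $\ell$ consecutive rows, whereas a yoke lives in a fixed horizontal band of height $\ell+1$ spanning all columns. Reconciling them demands careful bookkeeping of the $(e,\ell)$-translation, of the colour reversal, and of the off-by-one between the $\ell$ rows of a period and the $\ell+1$ rows of a yoke---the extra row being exactly the one that encodes the wrap-around inequality $\la_k^{(\ell)}\geq\la_{k+e+s_1-s_\ell}^{(1)}$ of Definition~\ref{def_flotw}. I expect the cleanest route, following the proof of Lemma~\ref{lem_yoke}, is to argue with the leftmost as-yet-unyoked beads and to track all positions modulo the translation, rather than to seek a closed formula.

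For part~(2) I would avoid reproving any Heisenberg-crystal statement. Writing the FLOTW property as cylindricity (condition~(a)) together with the residue condition~(b) of Definition~\ref{def_flotw}, part~(1) already identifies condition~(a) with $\dcA$ being a source in the $\sll$-crystal, so only the matching of condition~(b), in the presence of cylindricity, with $\dcA$ being a source in the $\cH$-crystal remains. This is supplied by \cite[Theorem~7.7]{Gerber2016} read through the involutivity of the level-rank duality: applying that characterization of FLOTW multipartitions to $\dcA$, whose dual is $\cA$ itself, identifies ``$\cA$ is FLOTW'' with ``$\dcA$ is a source in both the $\sll$- and the $\cH$-crystals'', and combining this with part~(1) yields the statement.
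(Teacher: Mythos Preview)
Your proposal follows essentially the same route as the paper: for Part~(1), reduce via Theorem~\ref{thm_totper} to the yoke--period correspondence inside $\bcA$ and invoke Lemma~\ref{lem_yoke}; for Part~(2), defer to \cite{Gerber2016}. The paper's argument is terser than yours---its key geometric remark is that a NE/SW yoke spreads over at most $e$ columns and therefore fits inside one vertical window, where it becomes an $\ell$-period---but the strategy is identical.
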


\begin{proof}
\begin{enumerate}
\item Assume $\cA$ is $e$-cylindric and extend the yoking procedure of Section \ref{sec_yoke} to $\bcA$.
By Lemma \ref{lem_yoke}, all white yokes have a ``north-east/south-west'' direction, 
so that every yoke in $\cA$ spreads on at most $e$ columns.
In fact, slicing $\bcA$ horizontally and then vertically shows that 
every white yoke in $\cA$ corresponds to a period in $\dcA$.
Therefore, $\dcA$ is totally periodic, and the result follows by Theorem \ref{thm_totper}.
Conversely, if one represents a totally periodic abacus vertically, periods
corresponds to yokes in the horizontal level-rank dual which all have a ``north-east/south-west'' direction.
\item This is a direct consequence of \cite[Theorem 6.19]{Gerber2016} and has been shown in \cite[Proof of Theorem 7.7]{Gerber2016} already.
Alternatively, one can use the combinatorial characterizations of (1) and \cite[Theorem 4.15]{GerberNorton2017} to recover this result directly.
\end{enumerate}
\end{proof}

\begin{exa}
Let $e=\ell=3$, $\bla=(3.2,4.2,4)$ and $\bs=(-3,-2,-1)$.
Then, $|\bla,\bs\rangle$ is cylindric as Figure \ref{ab_thm}  shows ($\cA:=\cA(\bla,\bs)$ is represented in $\bcA$).
If we represent the white yokes in $\dcA$, each of them forms an $\ell$-period.
Thus, $|\dbla,\dbs\rangle$ is totally $\ell$-periodic.
In fact, $\dbla=(1,2^2.1^2,2^4)$ and $\dbs=(1,2,3)$, which we had represented horizontally in Example \ref{exa_totper}.
\begin{figure}
\begin{tikzpicture}[scale=0.5, bb/.style={draw,circle,fill,minimum size=2.5mm,inner sep=0pt,outer sep=0pt}, wb/.style={draw,circle,fill=white,minimum size=2.5mm,inner sep=0pt,outer sep=0pt}]

\draw[](-4,0)--node[]{}(-3,1);
\draw[](-3,1)--node[]{}(-1,2);

\draw[](-3,0)--node[]{}(-2,1);
\draw[](-2,1)--node[]{}(0,2);

\draw[](-1,0)--node[]{}(0,1);
\draw[](0,1)--node[]{}(1,2);

\draw[](1,0)--node[]{}(1,1);
\draw[](1,1)--node[]{}(2,2);

\draw[](2,0)--node[]{}(3,1);
\draw[](3,1)--node[]{}(4,2);

\draw[](3,0)--node[]{}(4,1);
\draw[](4,1)--node[]{}(5,2);

\draw[](4,0)--node[]{}(5,1);
\draw[](5,1)--node[]{}(6,2);

\draw[](5,0)--node[]{}(6,1);
\draw[](6,1)--node[]{}(7,2);

\draw[](6,0)--node[]{}(7,1);
\draw[](7,1)--node[]{}(8,2);

\draw[](7,0)--node[]{}(8,1);
\draw[](8,1)--node[]{}(9,2);

\draw[](8,0)--node[]{}(9,1);

%%% yokes for periodicity

\draw[](3,7)--node[]{}(2,6);
\draw[](2,6)--node[]{}(2,5);

\draw[](3,6)--node[]{}(3,5);
\draw[](3,5)--node[]{}(1,4);

\draw[](3,4)--node[]{}(2,3);
\draw[](2,3)--node[]{}(1,2);

\draw[](2,0)--node[]{}(1,-1);

\draw[](3,0)--node[]{}(2,-1);
\draw[](2,-1)--node[]{}(1,-2);

\draw[](3,-1)--node[]{}(2,-2);
\draw[](2,-2)--node[]{}(1,-3);

\draw[](3,-2)--node[]{}(2,-3);

\node [bb] at (-8,0) {};
\node [bb] at (-7,0) {};
\node [bb] at (-6,0) {};
\node [bb] at (-5,0) {};
\node [wb] at (-4,0) {};
\node [wb] at (-3,0) {};
\node [bb] at (-2,0) {};
\node [wb] at (-1,0) {};
\node [bb] at (0,0) {};
\node [wb] at (1,0) {};
\node [wb] at (2,0) {};
\node [wb] at (3,0) {};
\node [wb] at (4,0) {};
\node [wb] at (5,0) {};
\node [wb] at (6,0) {};
\node [wb] at (7,0) {};
\node [wb] at (8,0) {};
\node [wb] at (9,0) {};

\node [bb] at (-8,1) {};
\node [bb] at (-7,1) {};
\node [bb] at (-6,1) {};
\node [bb] at (-5,1) {};
\node [bb] at (-4,1) {};
\node [wb] at (-3,1) {};
\node [wb] at (-2,1) {};
\node [bb] at (-1,1) {};
\node [wb] at (0,1) {};
\node [wb] at (1,1) {};
\node [bb] at (2,1) {};
\node [wb] at (3,1) {};
\node [wb] at (4,1) {};
\node [wb] at (5,1) {};
\node [wb] at (6,1) {};
\node [wb] at (7,1) {};
\node [wb] at (8,1) {};
\node [wb] at (9,1) {};

\node [bb] at (-8,2) {};
\node [bb] at (-7,2) {};
\node [bb] at (-6,2) {};
\node [bb] at (-5,2) {};
\node [bb] at (-4,2) {};
\node [bb] at (-3,2) {};
\node [bb] at (-2,2) {};
\node [wb] at (-1,2) {};
\node [wb] at (0,2) {};
\node [wb] at (1,2) {};
\node [wb] at (2,2) {};
\node [bb] at (3,2) {};
\node [wb] at (4,2) {};
\node [wb] at (5,2) {};
\node [wb] at (6,2) {};
\node [wb] at (7,2) {};
\node [wb] at (8,2) {};
\node [wb] at (9,2) {};

\draw[dashed](0.5,-0.5)--node[]{}(0.5,2.5);

%% one above

\node [bb] at (-5,3) {};
\node [bb] at (-4,3) {};
\node [bb] at (-3,3) {};
\node [bb] at (-2,3) {};
\node [wb] at (-1,3) {};
\node [wb] at (0,3) {};
\node [bb] at (1,3) {};
\node [wb] at (2,3) {};
\node [bb] at (3,3) {};
\node [wb] at (4,3) {};
\node [wb] at (5,3) {};
\node [wb] at (6,3) {};
\node [wb] at (7,3) {};
\node [wb] at (8,3) {};
\node [wb] at (9,3) {};
\node [wb] at (10,3) {};
\node [wb] at (11,3) {};
\node [wb] at (12,3) {};

\node [bb] at (-5,4) {};
\node [bb] at (-4,4) {};
\node [bb] at (-3,4) {};
\node [bb] at (-2,4) {};
\node [bb] at (-1,4) {};
\node [wb] at (0,4) {};
\node [wb] at (1,4) {};
\node [bb] at (2,4) {};
\node [wb] at (3,4) {};
\node [wb] at (4,4) {};
\node [bb] at (5,4) {};
\node [wb] at (6,4) {};
\node [wb] at (7,4) {};
\node [wb] at (8,4) {};
\node [wb] at (9,4) {};
\node [wb] at (10,4) {};
\node [wb] at (11,4) {};
\node [wb] at (12,4) {};

\node [bb] at (-5,5) {};
\node [bb] at (-4,5) {};
\node [bb] at (-3,5) {};
\node [bb] at (-2,5) {};
\node [bb] at (-1,5) {};
\node [bb] at (0,5) {};
\node [bb] at (1,5) {};
\node [wb] at (2,5) {};
\node [wb] at (3,5) {};
\node [wb] at (4,5) {};
\node [wb] at (5,5) {};
\node [bb] at (6,5) {};
\node [wb] at (7,5) {};
\node [wb] at (8,5) {};
\node [wb] at (9,5) {};
\node [wb] at (10,5) {};
\node [wb] at (11,5) {};
\node [wb] at (12,5) {};

\draw[dashed](3.5,2.5)--node[]{}(3.5,5.5);

%% two above

\node [bb] at (-2,6) {};
\node [bb] at (-1,6) {};
\node [bb] at (0,6) {};
\node [bb] at (1,6) {};
\node [wb] at (2,6) {};
\node [wb] at (3,6) {};
\node [bb] at (4,6) {};
\node [wb] at (5,6) {};
\node [bb] at (6,6) {};
\node [wb] at (7,6) {};
\node [wb] at (8,6) {};
\node [wb] at (9,6) {};
\node [wb] at (10,6) {};
\node [wb] at (11,6) {};
\node [wb] at (12,6) {};
\node [wb] at (13,6) {};
\node [wb] at (14,6) {};
\node [wb] at (15,6) {};

\node [bb] at (-2,7) {};
\node [bb] at (-1,7) {};
\node [bb] at (-0,7) {};
\node [bb] at (1,7) {};
\node [bb] at (2,7) {};
\node [wb] at (3,7) {};
\node [wb] at (4,7) {};
\node [bb] at (5,7) {};
\node [wb] at (6,7) {};
\node [wb] at (7,7) {};
\node [bb] at (8,7) {};
\node [wb] at (9,7) {};
\node [wb] at (10,7) {};
\node [wb] at (11,7) {};
\node [wb] at (12,7) {};
\node [wb] at (13,7) {};
\node [wb] at (14,7) {};
\node [wb] at (15,7) {};

\node [bb] at (-2,8) {};
\node [bb] at (-1,8) {};
\node [bb] at (0,8) {};
\node [bb] at (1,8) {};
\node [bb] at (2,8) {};
\node [bb] at (3,8) {};
\node [bb] at (4,8) {};
\node [wb] at (5,8) {};
\node [wb] at (6,8) {};
\node [wb] at (7,8) {};
\node [wb] at (8,8) {};
\node [bb] at (9,8) {};
\node [wb] at (10,8) {};
\node [wb] at (11,8) {};
\node [wb] at (12,8) {};
\node [wb] at (13,8) {};
\node [wb] at (14,8) {};
\node [wb] at (15,8) {};

\draw[dashed](6.5,5.5)--node[]{}(6.5,8.5);

%% one below

\node [bb] at (-11,-3) {};
\node [bb] at (-10,-3) {};
\node [bb] at (-9,-3) {};
\node [bb] at (-8,-3) {};
\node [wb] at (-7,-3) {};
\node [wb] at (-6,-3) {};
\node [bb] at (-5,-3) {};
\node [wb] at (-4,-3) {};
\node [bb] at (-3,-3) {};
\node [wb] at (-2,-3) {};
\node [wb] at (-1,-3) {};
\node [wb] at (0,-3) {};
\node [wb] at (1,-3) {};
\node [wb] at (2,-3) {};
\node [wb] at (3,-3) {};
\node [wb] at (4,-3) {};
\node [wb] at (5,-3) {};
\node [wb] at (6,-3) {};

\node [bb] at (-11,-2) {};
\node [bb] at (-10,-2) {};
\node [bb] at (-9,-2) {};
\node [bb] at (-8,-2) {};
\node [bb] at (-7,-2) {};
\node [wb] at (-6,-2) {};
\node [wb] at (-5,-2) {};
\node [bb] at (-4,-2) {};
\node [wb] at (-3,-2) {};
\node [wb] at (-2,-2) {};
\node [bb] at (-1,-2) {};
\node [wb] at (0,-2) {};
\node [wb] at (1,-2) {};
\node [wb] at (2,-2) {};
\node [wb] at (3,-2) {};
\node [wb] at (4,-2) {};
\node [wb] at (5,-2) {};
\node [wb] at (6,-2) {};

\node [bb] at (-11,-1) {};
\node [bb] at (-10,-1) {};
\node [bb] at (-9,-1) {};
\node [bb] at (-8,-1) {};
\node [bb] at (-7,-1) {};
\node [bb] at (-6,-1) {};
\node [bb] at (-5,-1) {};
\node [wb] at (-4,-1) {};
\node [wb] at (-3,-1) {};
\node [wb] at (-2,-1) {};
\node [wb] at (-1,-1) {};
\node [bb] at (0,-1) {};
\node [wb] at (1,-1) {};
\node [wb] at (2,-1) {};
\node [wb] at (3,-1) {};
\node [wb] at (4,-1) {};
\node [wb] at (5,-1) {};
\node [wb] at (6,-1) {};

\draw[dashed](-2.5,-3.5)--node[]{}(-2.5,-0.5);

\draw[dashed](-2.5,-0.5)--node[]{}(0.5,-0.5);
\draw[dashed](0.5,2.5)--node[]{}(3.5,2.5);
\draw[dashed](3.5,5.5)--node[]{}(6.5,5.5);

\draw[](-9,-0.5)--node[]{}(10,-0.5);
\draw[](-9,2.5)--node[]{}(10,2.5);

\draw[](0.5,9)--node[]{}(0.5,-4);
\draw[](3.5,9)--node[]{}(3.5,-4);

\end{tikzpicture} 
\caption{The level-rank dual of an $e$-cylindric $\ell$-abacus is totally $\ell$-periodic.}
\label{ab_thm}
\end{figure}
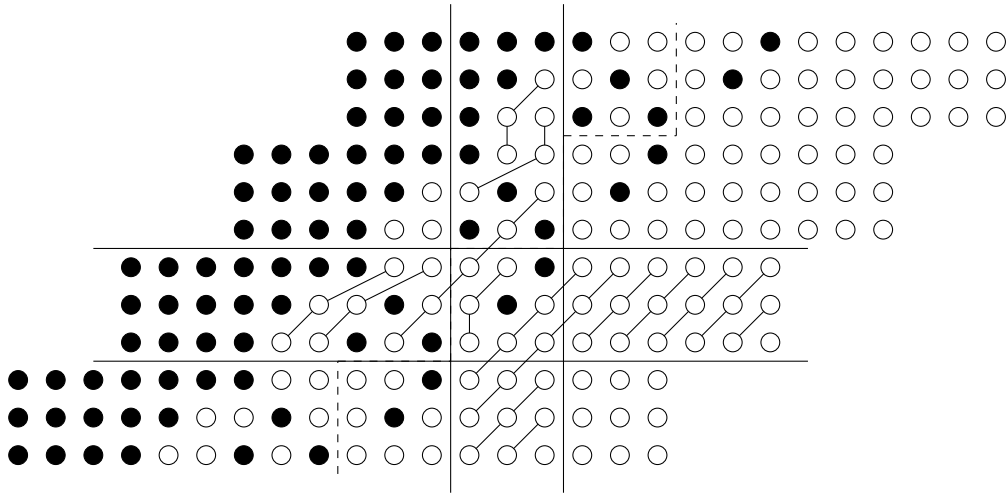

\end{exa}

\begin{rem}\label{tight}
In \cite[Definition 3.8]{Tingley2008}, Tingley defines the notion of a ``tight'' descending abacus.
It is proved in \cite[Proposition 5.15]{Gerber2016a} that Tingley's tightening operators correspond to the raising Heisenberg crystal operators.
Moreover, we already observed in Remark \ref{rem1} (3) that descending abaci correspond to cylindric multipartitions.
Therefore by Theorem \ref{thm_main} (2), tight descending abaci correspond to FLOTW multipartitions.
This shall come as no surprise because Tingley's tight descending abaci are constructed to realize irreducible highest weight crystals \cite[Theorem 3.14]{Tingley2008},
and so are FLOTW multipartitions (as mentioned in Remark \ref{rem_flotw}).
\end{rem}

\pagebreak

\bibliographystyle{plain}
% \bibliography{/users/gerber/Documents/Math/Recherche/Biblio/biblio}
% \bibliography{/home/thomas/Documents/Math/Research/Biblio/biblio}

\begin{thebibliography}{10}

\bibitem{Ariki1996}
Susumu Ariki.
\newblock {On the decomposition numbers of the {H}ecke algebra of
  {$G(m,1,n)$}}.
\newblock {\em J. Math. Kyoto Univ.}, 36(4):789--808, 1996.

\bibitem{Ariki2007}
Susumu Ariki.
\newblock {Proof of the modular branching rule for cyclotomic Hecke algebras}.
\newblock {\em J. Alg.}, 306:290--300, 2007.

\bibitem{Borodin2007}
Alexei Borodin.
\newblock {Periodic Schur process and cylindric partitions}.
\newblock {\em Duke Math. J.}, 140:391--468, 2007.

\bibitem{FLOTW1999}
Omar Foda, Bernard Leclerc, Masato Okado, Jean-Yves Thibon, and Trevor Welsh.
\newblock {Branching functions of $A_{n-1}^{(1)}$ and Jantzen-Seitz problem for
  Ariki-Koike algebras}.
\newblock {\em Adv. Math.}, 141:322--365, 1999.

\bibitem{FodaWelsh2015}
Omar Foda and Trevor Welsh.
\newblock {Cylindric partitions, $W_r$ characters and the
  Andrews-Gordon-Bressoud identities}.
\newblock {\em J. Phys. A: Math. Theor.}, 49:164004, 2016.

\bibitem{GeckJacon2006}
Meinolf Geck and Nicolas Jacon.
\newblock {Canonical basic sets in type $B$}.
\newblock {\em J. Alg.}, 306:104--127, 2006.

\bibitem{GeckJacon2011}
Meinolf Geck and Nicolas Jacon.
\newblock {\em {Representations of Hecke Algebras at Roots of Unity}}.
\newblock Springer, 2011.

\bibitem{Gerber2016a}
Thomas Gerber.
\newblock {Heisenberg algebra, wedges and crystals}.
\newblock {\em J. Alg. Comb.}, 2018.
\newblock https://doi.org/10.1007/s10801-018-0820-8.

\bibitem{Gerber2016}
Thomas Gerber.
\newblock {Triple crystal action in Fock spaces}.
\newblock {\em Adv. Math.}, 329:916--954, 2018.

\bibitem{GerberNorton2017}
Thomas Gerber and Emily Norton.
\newblock {The $\mathfrak{sl}_\infty$-crystal combinatorics of higher level
  Fock spaces}.
\newblock {\em J. Comb. Alg.}, 2:1--43, 2018.

\bibitem{GesselKrattenthaler1997}
Ira Gessel and Christian Krattenthaler.
\newblock {Cylindric partitions}.
\newblock {\em Trans. Amer. Math. Soc.}, 349:429--479, 1997.

\bibitem{Jacon2004}
Nicolas Jacon.
\newblock {On the parametrization of the simple modules for Ariki-Koike
  algebras at roots of unity}.
\newblock {\em J. Math. Kyoto Univ.}, 44:729--767, 2004.

\bibitem{Jacon2017}
Nicolas Jacon.
\newblock {Kleshchev multipartitions and extended Young diagrams}.
\newblock 2017.
\newblock arXiv:1706.07595.

\bibitem{JaconLecouvey2012}
Nicolas Jacon and C\'edric Lecouvey.
\newblock {A combinatorial decomposition of higher level Fock spaces}.
\newblock {\em Osaka J. Math.}, 50(4):897--920, 2013.

\bibitem{JMMO1991}
Michio Jimbo, Kailash~C. Misra, Tetsuji Miwa, and Masato Okado.
\newblock Combinatorics of representations of {$U_q(\widehat{sl(n)})$} at
  {$q=0$}.
\newblock {\em Comm. Math. Phys.}, 136(3):543--566, 1991.

\bibitem{Losev2015}
Ivan Losev.
\newblock {Supports of simple modules in cyclotomic Cherednik categories O}.
\newblock 2015.
\newblock arXiv:1509.00526.

\bibitem{Shan2011}
Peng Shan.
\newblock {Crystals of Fock spaces and cyclotomic rational double affine Hecke
  algebras}.
\newblock {\em Ann. Sci. \'Ec. Norm. Sup\'er.}, 44:147--182, 2011.

\bibitem{ShanVasserot2012}
Peng Shan and \'Eric Vasserot.
\newblock {Heisenberg algebras and rational double affine Hecke algebras}.
\newblock {\em J. Amer. Math. Soc.}, 25:959--1031, 2012.

\bibitem{Tingley2008}
Peter Tingley.
\newblock {Three combinatorial models for $\widehat{\mathfrak{sl}_n}$ crystals,
  with applications to cylindric plane partitions}.
\newblock {\em Int. Math. Res. Notices}, Art. ID RNM143:1--40, 2008.

\bibitem{Uglov1999}
Denis Uglov.
\newblock {Canonical bases of higher-level $q$-deformed Fock spaces and
  Kazhdan-Lusztig polynomials}.
\newblock {\em Progr. Math.}, 191:249--299, 1999.

\end{thebibliography}

\end{document}